\documentclass{amsart}

\usepackage[letterpaper,top=2cm,bottom=2cm,left=3cm,right=3cm,marginparwidth=1.75cm]{geometry}

\usepackage{latexsym,amsmath,a4wide,amssymb,amsthm,graphics,enumitem}
\usepackage[colorlinks=true, allcolors=blue]{hyperref}

\newtheorem{theorem}{Theorem}[section]
\newtheorem{example}{Example}[section]

\newtheorem{proposition}{Proposition}[section]
\newtheorem{corollary}{Corollary}[section]
\newtheorem{remark}{Remark}[section]

\newcommand{\comp}{\mathbb{C}}
\newcommand{\RR}{\mathbb{R}}
\newcommand{\CC}{\mathbb{C}}

\renewcommand{\leq}{\leqslant}
\renewcommand{\geq}{\geqslant}

\usepackage{color}

\title{Matrix pencils with the numerical range equal to the whole complex plane}
\author{Vadym Koval, Patryk Pagacz}

\begin{document}
\maketitle

\begin{abstract}
The main purpose of this article is to show that the numerical range of a linear pencil $\lambda A+B$ is equal to $\comp$ if and only if $0$ belongs to the convex hull of the joint numerical range of $A$ and $B$. We also prove that if  the numerical range of a linear pencil $\lambda A+B$ is equal to $\comp$ and $A+A^*,B+B^*\geq 0$, then $A$ and $B$ have a common isotropic vector.

Moreover, we improve the classical result which describes Hermitian linear pencils.

\end{abstract}

\subjclass{MSC2020: 15A22,15A60}%

\keywords{Keywords: linear pencil, numerical range, isotropic vector, matrix polynomial, joint numerical range}






\section{Introduction}
Let $\comp^{N\times N}$ and $\mathcal{H}(N)$ denote the algebra of $N\times N$ complex matrices and the algebra of $N\times N$ Hermitian matrices, respectively.
By \emph{a matrix polynomial} $P(\lambda)$ we mean, a polynomial with coefficients from $\comp^{N\times N}$, i.e.
$$P(\lambda)=A_k\lambda^k+ A_{k-1}\lambda^{k-1}+\dots+A_0,\quad A_0,A_1,\dots,A_k \in \comp^{N\times N}.$$
For $k=1$, the matrix polynomial is called \emph{a linear pencil}.
In this paper we deal with \emph{a numerical range of matrix polynomial}, i.e. $$W(P(\lambda)):=\{\lambda\in \comp : x^*P(\lambda)x=0, \textnormal{ for some nonzero } x\in \comp^N\}.$$
It is easy to observe that $W(\lambda I-A)$ is equal to \emph{a numerical range of matrix} $A$, i.e. $W(A)=\{x^*Ax: x^*x=1\}$. More basic properties of the numerical ranges of matrix polynomials one can find in \cite{Li-Rodman}. 

The notion of numerical range of a matrix can be extended for $m$-tuple of matrices. We define
$W(A_1,A_2,\ldots,A_m):=\{(x^*A_1x,x^*A_2x,\dots,x^*A_mx): x^*x=1\}$ and call it \emph{a joint numerical range}.
If it is convenient we will identify $\comp$ with $\mathbb{R}^2$. In particular, we identify $W(A,B)$ with $W(A_1,A_2,B_1,B_2)$, where $A=A_1+iA_2, B=B_1+iB_2$ and $A_1,A_2,B_1,B_2$ are Hermitian (self-adjoint) matrices.

Many matrix polynomials find their role in applications (see e.g. \cite{NLEVP,MR3396732}). The study of numerical ranges of matrix polynomials has a crucial role in the stability theory. In particular, the system represented by the matrix polynomial is stable,  if its numerical range is contained in the left half-plane. Thus the shape of a numerical range was the topic of many papers (see e.g. \cite{ITO2016683,CHIEN200269,NAKAZATO2001105,PSARRAKOS2000127,Bebiano,Ratio,Beb1,CHIEN2002205,MAROULAS1997101,MAROULAS199897,MAROULAS199641,approxBoundary,SzymanskiWojtylak,mehl2021matrix}).
Moreover, the classical approach to investigating matrix polynomials is linearization, where the polynomial is converted into a larger matrix pencil with the similar properties (see \cite{Linearizations,Linearizations2,MehrmannWatkins,KRUPNIK199845,LinLancaster}). Thus, it is most natural to consider linear pencils, so we mostly do that in this paper. 

For matrices $A_0,A_1,\dots,A_m\in \comp^{N\times N}$ a nonzero vector  $v\in \comp^N$ is called \emph{a common isotropic vector} if
$$v^*A_0v=v^*A_1v=\dots =v^*A_mv=0.$$
It is easy to see that if the matrices $A_0,A_1,\dots,A_k\in \comp^{N\times N}$ have a common isotropic vector, then $W(P(\lambda))=\comp$, where
$P(\lambda)=A_k\lambda^k+ A_{k-1}\lambda^{k-1}+\dots+A_0$.
The Example \ref{VadymEx} shows that the converse implication for quadratic polynomials is not true, even if all matrices are Hermitian. On the other side one can deduce from Theorem 4.1 \cite{Li-Rodman} that two Hermitian matrices $A,B\in \comp^{N\times N}$ have a common isotropic vector if and only if $W(A\lambda+B)=\comp$.

Unfortunately, there is an error in Theorem 4.1(e) \cite{Li-Rodman}.
It was observed by the authors of \cite{Bebiano}, but they also made an error in their improvement.

The main purpose of this paper is to adjudicate when the equality $W(A\lambda+B)=\comp$ implies that $A$ and $B$ have a common isotropic vector.

The paper is divided into tree parts. 
In Section \ref{Main} we describe linear pencils $P(\lambda)$ such that $W(P(\lambda))=\comp$. 
Moreover, we solve an open question proposed in \cite{mehl2021matrix} and inspired by the study on dissipative Hamiltonian matrix pencils (see also \cite{MMWsimax2018,SzymanskiWojtylak,MMW2021}).
Namely, we prove that for linear pencils $P(\lambda)=A\lambda+B$ such that $ A+A^*, B+B^* \geq 0$ and $W(P(\lambda))=\comp$, the matrices $A, B$ have a common isotropic vector. Our Theorem \ref{openMMW} extends a partial result Theorem 5.7 \cite{mehl2021matrix}.

In Section \ref{self-a} we consider matrix polynomials with Hermitian coefficients. We improve the erroneous Theorem 4.1 from \cite{Li-Rodman} (and also Theorem 1 from \cite{Bebiano}). In particular, we show that for linear pencils with Hermitian coefficients the equality $W(P(\lambda))=\mathbb{R}$ is never satisfied. 

Additionally in Section \ref{last}, we show that second-degree matrix polynomial $P(\lambda)$ with semidefinite coefficients satisfy $W(P(\lambda))=\comp$ if and only if matrix coefficients have a common isotropic vector.
Moreover, we give an example (Example \ref{VadymEx}) of three $3\times 3$ Hermitian matrices $A,B,C$ without common isotropic vector such that $W(P(\lambda))=\comp$, where $P(\lambda)=A\lambda^2+B\lambda+C$.

\section{Linear Pencils with $W(P(\lambda))=\comp$}\label{Main}

The well-known Toeplitz-Hausdorff theorem(see e.g. \cite{davis_1971}) says that the numerical range of a matrix is always convex.
This result was extended in \cite{extHT}, namely the joint numerical range of three Hermitian matrices of dimension larger than $2$ is convex. But the joint numerical range of four Hermitian matrices often is not convex (see \cite{au1979remark}). In other words,
the joint numerical range of two (arbitrary) matrices does not have to be convex. 

In Remark 1 \cite{PSARRAKOS2000127} the authors observed that the numerical range of a linear pencil with one Hermitian matrix is equal $\comp$ if $0$ belongs to the convex hull of the joint numerical range of the linear pencil's coefficients. 
In view of results from \cite{extHT} the only example with nonconvex joint numerical range can be found for $2\times 2$ matrices (see Corollary \ref{3hermitian}).

\begin{example}\label{ex2x2}
Let $A=\begin{bmatrix}
    0 & 0 \\
    2 & 0
\end{bmatrix}$
and $B=\begin{bmatrix}
     1 & 0 \\
     0 & -1
\end{bmatrix}$.
Then the pencil $P(\lambda)=A\lambda+B$ has no common isotropic vector, but $W(P(\lambda))=\mathbb{C}$.
\end{example}

\begin{proof}
Indeed, if we take a vector $v=(re^{-i\phi},1)$, then $W(P(\lambda))\ni-\frac{v^*Bv}{v^*Av}=\frac{1-r^2}{2r}\cdot e^{i\phi}$. So $W(P(\lambda))=\mathbb{C}$.

On the other hand, for $v=(x,y)$ equality $v^*Bv=0$ holds if and only if $|x|=|y|$ and equality $|v^*Av|=0$ if and only if $|x||y|=0$.
\end{proof}

Fortunately, Remark 1 from \cite{PSARRAKOS2000127} can be extended without any assumptions about matrices.

\begin{theorem}\label{convIso}
Let $A,B\in \comp^{N\times N}$. Then $W(A\lambda+B)=\comp$ if and only if $0$ belongs to the convex hull of $ W(A,B)$.
\end{theorem}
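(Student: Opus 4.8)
The plan is to prove the two implications separately, exploiting the Toeplitz--Hausdorff theorem for the ``if'' part and a separating hyperplane argument for the ``only if'' part. Throughout I identify $\comp^2$ with $\RR^4$ as in the introduction and write a point of $W(A,B)$ as a pair $(a,b)=(x^*Ax,x^*Bx)$ with $\|x\|=1$.

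For the implication $0\in\operatorname{conv}W(A,B)\Rightarrow W(A\lambda+B)=\comp$, I would fix an arbitrary $\lambda\in\comp$ and consider the $\comp$-linear map $L_\lambda\colon\comp^2\to\comp$, $L_\lambda(a,b)=\lambda a+b$. Since $L_\lambda(x^*Ax,x^*Bx)=x^*(\lambda A+B)x$, it sends $W(A,B)$ onto the ordinary numerical range $W(\lambda A+B)$ of the single matrix $\lambda A+B$. Because an affine map carries convex hulls to convex hulls, $L_\lambda(\operatorname{conv}W(A,B))=\operatorname{conv}W(\lambda A+B)$, and the latter equals $W(\lambda A+B)$ by the Toeplitz--Hausdorff theorem. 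As $0\in\operatorname{conv}W(A,B)$ and $L_\lambda(0)=0$, this yields $0\in W(\lambda A+B)$, i.e.\ there is a unit vector $x$ with $x^*(\lambda A+B)x=0$, so $\lambda\in W(A\lambda+B)$. Since $\lambda$ was arbitrary, $W(A\lambda+B)=\comp$.

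For the converse I argue by contraposition: assume $0\notin\operatorname{conv}W(A,B)$. As $W(A,B)$ is the continuous image of the unit sphere it is compact, so by Carathéodory's theorem $\operatorname{conv}W(A,B)$ is compact and convex, and a separating hyperplane gives $\alpha,\beta\in\comp$, not both zero, and $c>0$ with $\operatorname{Re}(\alpha a+\beta b)\geq c$ for every $(a,b)\in W(A,B)$. I would then split into two cases according to whether $\beta$ vanishes. If $\beta\neq0$, I claim $\lambda_0:=\alpha/\beta\notin W(A\lambda+B)$: were there a unit $x$ with $\lambda_0\,x^*Ax+x^*Bx=0$, then putting $a=x^*Ax,\ b=x^*Bx$ we would have $b=-\lambda_0 a$ and hence $\alpha a+\beta b=\alpha a-\beta\lambda_0 a=0$, forcing $\operatorname{Re}(\alpha a+\beta b)=0<c$, a contradiction. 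If $\beta=0$, then $\alpha\neq0$ and $\operatorname{Re}(\alpha a)\geq c$ forces $|a|\geq c/|\alpha|>0$ for every $a=x^*Ax$; hence any $\lambda\in W(A\lambda+B)$ has the form $\lambda=-b/a$ with $|a|\geq c/|\alpha|$ and $|b|\leq\|B\|$, so $|\lambda|\leq\|B\|\,|\alpha|/c$ and $W(A\lambda+B)$ is bounded. In either case $W(A\lambda+B)\neq\comp$.

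The routine ingredients (compactness of the joint numerical range, the elementary form $\operatorname{Re}(\alpha a+\beta b)$ of a real-linear functional on $\comp^2$, and the affine-image property of convex hulls) are standard. I expect the only genuinely delicate point to be the ``only if'' direction, and within it the treatment of the degenerate case $\beta=0$: there the separating functional does not exclude a single finite $\lambda_0$ but instead manifests as boundedness of $W(A\lambda+B)$ rather than omission of one point. Conceptually both cases say that the separation excludes the point $\alpha/\beta\in\comp\cup\{\infty\}$ of the Riemann sphere from $W(A\lambda+B)$.
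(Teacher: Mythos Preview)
Your proof is correct and takes a genuinely different, more elementary route than the paper in both directions.

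For the implication $0\in\operatorname{conv}W(A,B)\Rightarrow W(A\lambda+B)=\comp$, the paper invokes the Au--Yeung--Tsing theorem that the joint numerical range of three Hermitian matrices of size $N>2$ is convex, argues that every three-dimensional projection of $W(A,B)\subset\RR^4$ is convex, and hence that every line through $0$ meets $W(A,B)$; the case $N=2$ is handled separately via the ellipsoidal shape of the joint range. Your argument bypasses all of this: the single $\comp$-linear map $L_\lambda(a,b)=\lambda a+b$ pushes $W(A,B)$ onto the ordinary numerical range $W(\lambda A+B)$, and the classical Toeplitz--Hausdorff theorem alone gives $0\in W(\lambda A+B)$. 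This is strictly more elementary (no three-matrix convexity result, no dimension split) and arguably more transparent.

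For the converse, the paper also separates $0$ from $\operatorname{conv}W(A,B)$ by a hyperplane but then carries out a somewhat involved explicit construction of a $\tilde\lambda$ (solving a $2\times2$ linear system with a parameter $k$ and shrinking an intersection set $I$ away from a compact $\Omega$). Your observation that the separating functional can be written as $\operatorname{Re}(\alpha a+\beta b)$ and that $\lambda_0=\alpha/\beta$ is excluded (or, when $\beta=0$, that $W(A\lambda+B)$ is bounded) is cleaner and avoids the coordinate computation entirely. The Riemann-sphere remark at the end is a nice way to unify the two cases.

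In short: both proofs are valid, but yours uses only Toeplitz--Hausdorff and elementary separation, whereas the paper's forward direction leans on the stronger convexity result for triples of Hermitian matrices and a case distinction on $N$.
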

\begin{proof}
 Let us denote $P(\lambda)=A\lambda+B$. The matrices $A,B$ have a common isotropic vector if and only if $0\in W(A,B)$. Thus without loss of generality we can assume that $A,B$ do not have a common isotropic vector. 

($\Leftarrow$) Let us assume that $0$ belongs to the convex hull of $ W(A,B)$.

 Let $A=A_1+i A_2$ and $B=B_1+i B_2$, where $A_1,A_2,B_1,B_2\in \mathcal{H}(N)$.
There are points $x,y\in W(A,B)$ such that $tx+(1-t)y=0\not\in W(A,B)$, for some $t\in (0,1)$.

\textbf{Case $N>2$:}
\newline Let us consider an arbitrary three dimensional projection $Q:\mathbb{R}^4\to\mathbb{R}^4$ ($\dim \ker Q=1$). 

By \cite{extHT}  a joint numerical range of three Hermitian operators is convex. Thus it is easy to see that the set $QW(A_1,A_2,B_1,B_2)$ is convex as well.
Hence $0=Q0\in Q[x,y]=[Qx,Qy]\subset QW(A_1,A_2,B_1,B_2)$.

Hence there exists a point $0\not=w\in W(A,B)$ such that $Qw=0$. In other words, the line $\ker Q$ has a joint point with $W(A,B)$. 
Since $Q$ was arbitrary, all one dimensional subspaces (lines) intersect the joint numerical range $W(A,B)$. In particular, for any $\lambda\in \comp$ the two dimensional subspace  $\{(z,w)\in \comp^2: z\lambda+w=0\}$ intersect $W(A,B)$. Thus $W(P(\lambda))=\comp$. 

\textbf{Case $N=2$:}
\newline It is well-known that the joint numerical range of three $2\times 2$ Hermitian matrices is an ellipsoid (may be a degenerated one). Thus the set $QW(A_1,A_2,B_1,B_2)$ is an ellipsoid for any three dimensional projection $Q:\mathbb{R}^4\to\mathbb{R}^4$.
Moreover, $0\in \text{conv}(QW(A_1,A_2,B_1,B_2))$, since $0\in \text{conv}(W(A_1,A_2,B_1,B_2))$.
Thus $QW(A_1,A_2,B_1,B_2)\cap QL_\lambda\not=\emptyset$, where $L_\lambda=\{(z,w)\in \comp^2: z\lambda=w\}$, as any line passing through the point in the convex hull of an ellipsoid intersects it.
Hence $W(A,B)\cap L_\lambda\not=\emptyset$, as one can chose $Q$ to be a projection parallel to a line $l$ such that $l\in L_{\lambda}$. Therefore, $W(P(\lambda))=\comp$.

($\Rightarrow$) Now, let us assume that $0$ does not belong to the convex hull of $W(A,B)$.

Since $0 \not\in \text{conv}(W(A,B))$, the set $W (A,B)$ can be separated from $0$ by some hyper-plane, in other words there is a linear functional $f : \mathbb{R}^4 \ni (x_1,x_2,x_3,x_4) \mapsto \sum_{i=1}^4 \alpha_i x_i\in \mathbb{R}$ and $\varepsilon>0$ such that $f(W(A,B))>2\varepsilon$. 

Let us take (we can assume that $\alpha_1\not=0$) $\Tilde{\lambda}=\lambda_1+i\lambda_2\in \comp$ such that $$\alpha_1+\alpha_3\lambda_1+\alpha_4\lambda_2=\frac1k$$
and 
$$ \alpha_2-\alpha_3\lambda_2+\alpha_4\lambda_1 = \frac1k.$$

Let us consider an intersection of the sets $\{(z,w)\in \comp^2: \Tilde{\lambda} z=w\}=\{(z_1,z_2,w_1,w_2)\in \mathbb{R}^4 : w_1=\lambda_1z_1-\lambda_2z_2, w_2=\lambda_1z_2+\lambda_2z_1 \}$ and $f^{-1}(\varepsilon)$.
By simple computations one can get that this intersection is equal 
$$I:=\{(z_1,z_2,\lambda_1z_1-\lambda_2z_2,\lambda_1z_2+\lambda_2z_1): z_2=-z_1+k\varepsilon \}.$$
Thus for any ball there is large enough $k$ such that $I$ has  empty intersection with that ball. 

Let $\Omega:=\{x\in f^{-1}(\varepsilon): tx\in W(A,B) \textnormal{ for some } t>0 \}$. Since $W(A,B)$ is compact, $\Omega$ is compact as well. So for some $k$ one can achieve $\Omega \cap I = \emptyset$. And as a consequence,  $\{(z,w)\in \comp^2: \Tilde\lambda z=w\}\cap W(A,B)=\emptyset$. Hence $-\Tilde{\lambda} \not\in W(P(\lambda)) $.
\end{proof}

\begin{remark}
The joint numerical range of two matrices does not have to be convex (see \cite{au1979remark}). In fact, for any three independent with $Id$ Hermitian matrices $A_1,A_2,B_1$ there exists a Hermitian matrix $B_2$ such that $W(A_1,A_2,B_1,B_2)=W(A_1+iA_2,B_1+iB_2)$ is not convex (see \cite{LiPoon1999}).
\end{remark}

The following corollary shows that Example \ref{ex2x2} was not possible for larger matrices.

\begin{corollary}\label{3hermitian}
Let $A\in\comp^{N\times N}$ and $B\in\comp^{N\times N}$ be Hermitian matrices, where $N>2$. If $\mathbb{C}= W(A\lambda+B)$, then $A$ and $B$ have a common isotropic vector.
\end{corollary}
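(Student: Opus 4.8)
The plan is to read the corollary off Theorem \ref{convIso} once the joint numerical range is known to be convex. By Theorem \ref{convIso} the hypothesis $W(A\lambda+B)=\comp$ is equivalent to $0\in\mathrm{conv}\,W(A,B)$, and, as recorded at the outset of that theorem's proof, $A$ and $B$ admit a common isotropic vector if and only if $0\in W(A,B)$. So the entire task reduces to promoting $0\in\mathrm{conv}\,W(A,B)$ to $0\in W(A,B)$, which is automatic as soon as $W(A,B)$ is convex.

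First I would use the Hermitian hypothesis to pin down the shape of $W(A,B)$. Writing $A=A_1+iA_2$, $B=B_1+iB_2$ with $A_1,A_2,B_1,B_2\in\mathcal H(N)$, Hermicity forces $A_2=B_2=0$, so $W(A,B)$ is the joint numerical range of the two Hermitian matrices $A$ and $B$, lying in the plane $\{x_2=x_4=0\}\cong\mathbb R^2$. Under the identification $\comp\cong\mathbb R^2$ this set is exactly the numerical range $W(A+iB)=\{x^*(A+iB)x:x^*x=1\}$ of a single matrix, which is convex by the Toeplitz--Hausdorff theorem. Hence $\mathrm{conv}\,W(A,B)=W(A,B)$, and combining with the previous paragraph yields a unit vector $v$ with $v^*Av=v^*Bv=0$, i.e. a common isotropic vector.

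I expect the only real subtlety to be accounting for the hypothesis $N>2$, since the argument above needs no dimension restriction when both coefficients are Hermitian. The bound becomes essential precisely in the broader situation underlying Example \ref{ex2x2}, where only one coefficient is Hermitian: then $A_2=0$ but $B_2\neq0$, so $W(A,B)$ is the joint numerical range of the three Hermitian matrices $A,B_1,B_2$, whose convexity is guaranteed by \cite{extHT} only for $N>2$. In that regime the same chain---convexity, hence $0\in\mathrm{conv}\,W(A,B)=W(A,B)$, hence a common isotropic vector---goes through, while the nonconvex $2\times2$ joint numerical range of Example \ref{ex2x2} shows the conclusion genuinely fails without the dimension restriction. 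So I would present the convexity input as the crux, flagging that it rests on Toeplitz--Hausdorff for two Hermitian matrices and on \cite{extHT} when $N>2$ is truly needed.
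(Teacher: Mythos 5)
Your proof is correct, and it shares the paper's skeleton exactly: use Theorem \ref{convIso} to translate $W(A\lambda+B)=\comp$ into $0\in\text{conv}\,W(A,B)$, then upgrade this to $0\in W(A,B)$ (i.e.\ a common isotropic vector) by showing that $W(A,B)$ is itself convex. The only difference is the convexity input, and it is an instructive one. Reading the hypotheses literally (both coefficients Hermitian), you get $W(A,B)=W(A+iB)\subset\mathbb{R}^2$, which is convex by Toeplitz--Hausdorff in every dimension, so the assumption $N>2$ is never used; under this reading the corollary is in fact subsumed by the stronger Corollary \ref{CorForBebiano}. The paper's own proof instead assumes only that $B$ is Hermitian and writes $A=A_1+iA_2$ with $A$ arbitrary, so that $W(A,B)=W(A_1,A_2,B)\subset\mathbb{R}^3\times\{0\}$, and convexity then comes from the three-Hermitian-matrix result of \cite{extHT} --- which is precisely where $N>2$ enters. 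That stronger, one-Hermitian-coefficient version is what the sentence preceding the corollary promises (it rules out Example \ref{ex2x2}, whose coefficient $A$ is not Hermitian, for larger matrices). So the printed statement and the printed proof are slightly out of sync, and your final paragraph diagnoses this correctly: the two-Hermitian statement needs no dimension restriction, while the one-Hermitian version genuinely needs $N>2$, as Example \ref{ex2x2} shows. Since you also carry out the one-Hermitian argument via \cite{extHT}, your proposal covers the paper's actual proof as well as the literal statement.
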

\begin{proof}
Without loss of generality let us assume that $B\in \mathcal{H}(N)$ and $A=A_1+iA_2$, where $A_1,A_2\in \mathcal{H}(N)$. Thus $W(A,B)=W(A_1,A_2,B)\subset \mathbb{R}^3\times\{0\}$ and by \cite{extHT} it is a convex set. Thus the thesis follows from Theorem \ref{convIso}.
\end{proof}

If we consider a linear pencil with two Hermitian coefficients, we can strengthen the above thesis.

\begin{corollary}\label{CorForBebiano}
Let $A\in\comp^{N\times N}$ and $B\in\comp^{N\times N}$ be Hermitian matrices. If $\mathbb{R}\subset W(A\lambda+B)$, then $A$ and $B$ have a common isotropic vector.
\end{corollary}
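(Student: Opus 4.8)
The plan is to prove the contrapositive: assuming that $A$ and $B$ have \emph{no} common isotropic vector, I would exhibit a real number omitted by $W(A\lambda+B)$, so that $\mathbb{R}\not\subset W(A\lambda+B)$. First I would record the relevant reductions. Writing $P(\lambda)=A\lambda+B$, for a unit vector $x$ the numbers $a:=x^*Ax$ and $b:=x^*Bx$ are both real because $A,B$ are Hermitian, and $x^*P(\lambda)x=\lambda a+b$. Identifying $W(A,B)$ with the planar set $\Omega:=\{(x^*Ax,\,x^*Bx):x^*x=1\}\subset\mathbb{R}^2$, I note that $\Omega$ is exactly the classical numerical range $W(A+iB)$, hence is \emph{convex and compact} by the Toeplitz--Hausdorff theorem. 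As observed in the proof of Theorem \ref{convIso}, $A$ and $B$ have a common isotropic vector if and only if $0\in\Omega$; so under my assumption $0\notin\Omega$, and the goal is to produce a real $r\notin W(P(\lambda))$.

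Next I would translate membership in $W(P(\lambda))$ into a statement about lines. For real $r$ one has $r\in W(P(\lambda))$ if and only if there is a unit vector $x$ with $r\,x^*Ax+x^*Bx=0$, i.e.\ if and only if the line through the origin $\ell_r:=\{(a,b)\in\mathbb{R}^2:ra+b=0\}$ meets $\Omega$. As $r$ ranges over $\mathbb{R}$, the line $\ell_r$ (of slope $-r$) ranges over precisely the non-vertical lines through the origin. Thus $\mathbb{R}\subset W(P(\lambda))$ would force \emph{every} non-vertical line through $0$ to meet $\Omega$.

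The geometric core is then to contradict this. Since $\Omega$ is convex, compact, and avoids the origin, strict separation places $\Omega$ inside an open half-plane whose boundary passes through $0$; consequently the set of directions from $0$ to points of $\Omega$ is a closed arc of angular width strictly less than $\pi$ (an extent of exactly $\pi$ would give two points of $\Omega$ in opposite directions, and convexity would then put the segment between them, hence $0$, into $\Omega$). Identifying lines through the origin with points of the projective line, the lines that meet $\Omega$ form a closed arc of length $<\pi$, so the lines \emph{missing} $\Omega$ form a nonempty open arc. Being open and nonempty, this arc contains more than the single point corresponding to the vertical line, hence contains a non-vertical line; that line is some $\ell_r$ with $r\in\mathbb{R}$ and $\ell_r\cap\Omega=\emptyset$, giving $r\notin W(P(\lambda))$.

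The one delicate point, and the step I expect to require the most care, is exactly this last one: ensuring the omitted line can be chosen \emph{non-vertical}, i.e.\ that the separating direction corresponds to a finite real $r$ rather than to the vertical line $\{a=0\}$ (the ``$\lambda=\infty$'' direction). Openness of the family of missing lines is what guarantees this. I would also remark that, unlike Corollary \ref{3hermitian}, no restriction on $N$ is needed here, since the argument uses only convexity of the ordinary numerical range $W(A+iB)$, which holds in every dimension.
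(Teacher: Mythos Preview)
Your proof is correct and follows essentially the same approach as the paper's own argument: prove the contrapositive, identify $W(A,B)$ with the convex compact set $W(A+iB)\subset\mathbb{R}^2$, note that the absence of a common isotropic vector means $0\notin W(A,B)$, and then find a real $\tilde\lambda$ whose associated line through the origin misses $W(A,B)$. The paper simply asserts the existence of such a $\tilde\lambda\in\mathbb{R}$ without further comment, whereas you take care to justify that the missing line can be chosen non-vertical via the openness argument in the projective line; this is a genuine detail that the paper suppresses, and your treatment of it is correct and clearer.
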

\begin{proof}
Let $A$ and $B$ do not have a common isotropic vector.
Since $W(A,B)=W(A+iB)\subset\mathbb{R}^2$ is convex, then by Theorem \ref{convIso} $0\notin W(A,B)$. Thus there is $\tilde\lambda\in\mathbb{R}$ such that $L_{\tilde\lambda}\cap W(A,B)=\emptyset$, where $L_{\tilde\lambda}=\{x+iy\in \comp : \tilde\lambda x=y\}$. Therefore, $-\tilde\lambda \not\in W(A,B)$.
\end{proof}

In Theorem 5.7 from \cite{mehl2021matrix} the authors showed that if some three of the four matrices $R_1,R_2,J_1,J_2$, such that $$R_1\geq 0,\quad R_2\geq 0, \quad J_1^*=-J_1, \quad J_2^*=-J_2$$
do not have a common isotropic vector, then $W(R_1+J_1,R_2+J_2)\not=\comp$.

Now, we prove that it is enough to assume the four matrices $R_1,R_2,J_1,J_2$ do not have a common isotropic vector. And therefore we solve an open question proposed in \cite{mehl2021matrix}.

\begin{theorem}\label{openMMW}
Let $N>2$ and $P(\lambda)=A\lambda+B$ be a pencil such that $A=R_1+J_1\in \comp^{N\times N}$ and $B=R_2+J_2\in \comp^{n\times n}$, where $R_i\geq 0$, $J^*_i=-J_i$, for $i=1,2$ and $W(P(\lambda))=\mathbb{C}$ then $A,B$ have a common isotropic vector.
\end{theorem}
\begin{proof}
The joint numerical range $W(A,B)$ is naturally identified with $W(R_1,-iJ_1,R_2,-iJ_2)$.

Let $Q :\mathbb{R}^4\to\mathbb{R}^4$ be an orthogonal projection onto the space $lin\{(1,0,1,0),(0,1,0,0),(0,0,0,1)\}$. In other words, $Q(x_1,x_2,x_3,x_4)=(\frac{x_1+x_3}{\sqrt{2}},x_2,x_4)$, where the space $Q\mathbb{R}^4$ comes equipped with orthonormal basis  $\{(\frac{\sqrt{2}}{2},0,\frac{\sqrt{2}}{2},0),(0,1,0,0),(0,0,0,1)\}$.

Due to \cite{extHT} the  joint numerical range $W(\frac{R_1+R_2}{\sqrt{2}},-iJ_1,-iJ_2)$ is convex. Thus the  set  $QW(A,B)=QW(R_1,-iJ_1,R_2,-iJ_2)=W(\frac{R_1+R_2}{\sqrt{2}},-iJ_1,\frac{R_1+R_2}{\sqrt{2}},-iJ_2)$ is convex as well.

Let us assume for a while that $0\notin QW(A,B)$. Thus there is a linear functional $f:Q\mathbb{R}^4\ni (x_1,x_2,x_3) \mapsto \alpha_1x_1+\alpha_2x_2+\alpha_3x_3 \in \mathbb{R}$ such that $f(QW(A,B))>0$. Since $QW(A,B)$ is compact we can assume that $\alpha_1\alpha_2\alpha_3\not=0$.

Now, let us show that there is $\tilde{\lambda}=\lambda_1+i\lambda_2\in\comp$ such that $QL_{\tilde{\lambda}}\cap QW(A,B)=\emptyset$, where 
$L_{\Tilde{\lambda}} :=  \{(z,w)\in \comp^2: \Tilde{\lambda} z=w\}=\{(z_1,z_2,w_1,w_2)\in \mathbb{R}^4 : w_1=\lambda_1z_1-\lambda_2z_2, w_2=\lambda_1z_2+\lambda_2z_1 \}$.
Indeed, $$QL_{\Tilde{\lambda}}=\{\Big(\frac{z_1+\lambda_1z_1-\lambda_2z_2}{\sqrt{2}}, z_2,\lambda_1z_2+\lambda_2z_1\Big):z_1,z_2\in \mathbb{R}\}=$$
$$=\{\Big(\big(\frac{1+\lambda_1}{\sqrt{2}\lambda_2}t-\frac{\lambda_1+\lambda_1^2+\lambda_2^2}{\sqrt{2}\lambda_2}z \big), z,t\Big):z,t\in \mathbb{R}\}=\{\big(pt-qz, z,t\big):z,t\in \mathbb{R}\},$$
where $p=\frac{1+\lambda_1}{\sqrt{2}\lambda_2}$ and $q=\big(p+\frac{1}{2p}\big)\lambda_1+\frac{1}{2p}.$

Thus an appropriate choice of $\Tilde{\lambda}$ gives us an arbitrary nonzero numbers $p,q\in\mathbb{R}$.
Hence we can choose $\Tilde{\lambda}$ such that $f(QL_{\Tilde{\lambda}})=0$, so $QL_{\Tilde{\lambda}}\cap QW(A,B)=\emptyset$. And finally, $L_{\Tilde{\lambda}}\cap W(A,B)=\emptyset$. Hence $-\Tilde{\lambda}\notin W(P(\lambda))$.
Thus $W(P(\lambda))\not=\comp$, which is a contradiction.

Therefore, $0\in QW(A,B)$, so there is a vector $v$ such that $v^*\big(\frac{R_1+R_2}{\sqrt{2}}\big)v=0, v^*J_1v=0,  v^*J_2v=0.$ However, $v^*R_1v\geq 0, v^*R_2v\geq 0$. So $v$ is a common isotropic vector.

\end{proof}

\begin{remark}
In Theorem \ref{openMMW} is not enough to assume that only one of the matrices $R_1,R_2$ is nonnegative. 

In Theorem 4.2 \cite{LiPoon1999}, the authors show that for any matrices $A_1,A_2,A_3$ such that $Id,A_1,A_2,A_3$ are linearly independent there exists a matrix $X\in \comp^{n\times 2}$
such that $X^*X=Id_2$ and $W(A_1,A_2,A_3,XX^*)$ is not convex. Moreover, by the proof of Theorem 4.1 \cite{LiPoon1999}, it could be assumed that 
$$ X^*A_1X= \begin{bmatrix}
    0 & 1 \\
    1 & 0
\end{bmatrix}, \quad 
X^*A_2X= \begin{bmatrix}
    0 & i \\
    -i & 0
\end{bmatrix}.
$$
\end{remark}
Thus $W(X^*A_1X,X^*A_2X,X^*A_3X)$ is an ellipsoid with $0$ in its interior. By Lemma 4.1 \cite{Li1988} $(a,b,c,0)\in W(A_1,A_2,A_3,Id-XX^*)$ if and only if $(a,b,c)\in W(X^*A_1X,X^*A_2X,X^*A_3X)$  (see the proof of Theorem 4.2 \cite{LiPoon1999}).

Hence, for $R_1=Id-XX^*$, $R_2=A_1$, $J_1=iA_2$ and $J_2=iA_3$ there is
$$0\not\in W(R_1+J_1,R_2+J_2), \quad 0 \in \text{conv}W(R_1+J_1,R_2+J_2), \quad R_1\geq 0.$$
Thus by Theorem \ref{convIso} $W(R_1+J_1,R_2+J_2)=\comp$ and $R_1+J_1,R_2+J_2$ have no common isotropic vector.

\section{Linear pencils with Hermitian coefficients}\label{self-a}

Theorem 4.1 \cite{Li-Rodman} characterizes the numerical range of linear pencil with Hermitian coefficients, i.e. matrix polynomial $P(\lambda)=A\lambda -B$, where $A$ and $B$ are Hermitian matrices. Unfortunately, as it was observed in \cite{Bebiano}, there is an error in Theorem 4.1(e) \cite{Li-Rodman}. 

If $A\in \comp^{N\times N}$ and $B\in\comp^{N\times N}$ have common isotropic vector, then  $W(P(\lambda))=\comp$. Otherwise, by \cite{THOMPSON1976}, there is a non-singular matrix $X\in\comp^{N\times N}$ such that  $$X^*AX=I_n\oplus-I_m\oplus0_k, \quad X^*BX=\text{diag}(a_1,\ldots,a_n,b_1,\ldots,b_m,c_1\ldots,c_k),$$ for some $a_1,\ldots,a_n,b_1,\ldots,b_m,c_1\ldots,c_k \in \mathbb{R}$. Therefore, to characterize $W(A\lambda-B)$ it is enough to consider such diagonal matrices. Moreover, since $W(A\lambda-B)=W(-(A\lambda-B))$ we can assume that $n\geq m$.

\begin{example}\label{forbebiano}
Let $$A=\begin{bmatrix}
    1 & 0 \\
    0 & -1
\end{bmatrix}, \quad
 B=\begin{bmatrix}
     2 & 0 \\
     0 & -1
\end{bmatrix}.$$
Then $W(A\lambda-B)=\mathbb{R}\setminus(1,2)$. Indeed, the equation $\lambda x^*Ax-x^*Bx=0$ give us $$\lambda |x_1|^2-\lambda |x_2|^2-2|x_1|^2+|x_2|^2=0,$$ where $x=[x_1,x_2]^T$.
So $$\lambda=\frac{2 |x_1|^2-|x_2|^2}{|x_1|^2-|x_2|^2}=1+\frac{|x_1|^2}{|x_1|^2-|x_2|^2}=2+\frac{|x_2|^2}{|x_1|^2-|x_2|^2}.$$
Thus $\lambda\geq2$ for $|x_1|>|x_2|$ and $\lambda\leq 1$ for $|x_1|<|x_2|$.
\end{example}

Theorem 4.1(e) \cite{Li-Rodman} claims that in the above example $W(A\lambda-B)$ should be $\mathbb{R}$.
The authors of \cite{Bebiano} 
improve Theorem 4.1(e) and say that  $W(A\lambda-B)=\mathbb{R}$ or $W(A\lambda-B)=\mathbb{R} \setminus (a,b)$, for some $a<b$.
However, their reasoning is still erroneous. They claim that the case $W(\lambda A-B)=\mathbb{R}$ may occur for some indefinite $A,B$, which is not true (see Corollary \ref{CorForBebiano}). In fact, they consider the cases which they earlier exclude as matrices with common isotropic vector.

In the following proposition, we improve a full characterisation of the numerical range of Hermitian pencils given in \cite{Li-Rodman}.

\begin{theorem}\label{improvedLiRodman}
Let $P(\lambda)=A\lambda-B$ be a matrix pencil with Hermitian coefficients $A=I_n\oplus-I_m\oplus0_k$, $B=B_1\oplus B_2\oplus B_3=\text{diag}(a_1,\ldots,a_n,b_1,\ldots,b_m,c_1\ldots,c_k)$, where $n\geq m$. Suppose that $A$ and $B$ do not have a common non-zero isotropic vector. Then exactly one of the following cases holds:
\begin{enumerate}[label=\alph*)]{
    \item $A$ is positive definite (that is $A=I$) and $W(P(\lambda))=W(B)$. In particular, $W(P(\lambda))$ is a singleton if and only if $B$ is a scalar matrix; $W(P(\lambda))$ is a positive (nonnegative) line segment if and only if $B$ is positive (semi)definite.
    \item $A$ is a singular positive semidefinite matrix; $W(P(\lambda))=[\alpha,\infty)$ if $B_3$ is positive definite, and $W(P(\lambda))=(-\infty,\beta]$ if $B_3$ is negative definite, where $W(B_1)=[\alpha, \beta]$. In this case, $B_3$ must either be positive definite or negative definite because $A$ and $B$ have no common isotropic vector.
    \item $A$ is indefinite and $B$ is positive (negative) definite and $W(P(\lambda))=\RR\setminus(\alpha,\beta)$, where $W(B^{-\frac{1}{2}}AB^{-\frac{1}{2}})=[\frac{1}{\alpha}, \frac{1}{\beta}]$ ($W((-B^{-\frac{1}{2}})(-A)(-B^{-\frac{1}{2}}))=[\frac{1}{\alpha}, \frac{1}{\beta}]$). In this case, $W(P(\lambda))$ is the union of two disjoint unbounded intervals and $0\not \in W(P(\lambda))$. 
    \item $A$ is indefinite and $B$ is a singular positive (negative) semidefinite matrix, and $W(P(\lambda))=\{\mu^{-1}:\mu \in W(B\lambda - A), \mu \neq 0\}\cup \{0\}$ ($W(P(\lambda))=\{\mu^{-1}:\mu \in W((-B)\lambda - (-A)), \mu \neq 0\}\cup \{0\}$). In this case, $W(P(\lambda))$ is the union of two disjoint unbounded intervals and $0\in W(P(\lambda))$.
    \item Both $A$ and $B$ are indefinite and $W(P(\lambda))=\RR\setminus (\alpha,\beta)$ for some $\alpha <\beta$. More precisely, let $W(B_1)=[\alpha_1,\beta_1]$, $W(B_2)=[\alpha_2,\beta_2]$. If $k>0$ or $\alpha_1+\alpha_2>0$, then $\alpha=-\alpha_2$, $\beta=\alpha_1$. Otherwise, $\alpha=\beta_1$, $\beta=-\beta_2$. 
}\end{enumerate}

\end{theorem}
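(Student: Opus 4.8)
The plan is to reduce the entire statement to one definiteness question and then read every case off the block-diagonal form of the pencil. Since $A,B$ are Hermitian, for nonzero $x$ both $x^*Ax$ and $x^*Bx$ are real, so $x^*(\lambda A-B)x=0$ with $x^*Ax\neq 0$ forces $\lambda=\frac{x^*Bx}{x^*Ax}\in\RR$, while $x^*Ax=0$ together with $x^*(\lambda A-B)x=0$ would make $x$ a common isotropic vector, which is excluded. Hence $W(P(\lambda))\subseteq\RR$, and for real $\lambda$ we have $\lambda\in W(P(\lambda))$ iff $0\in W(\lambda A-B)$. As $\lambda A-B$ is Hermitian, $W(\lambda A-B)=[\lambda_{\min}(\lambda A-B),\lambda_{\max}(\lambda A-B)]$, so $0$ lies in it exactly when $\lambda A-B$ is neither positive nor negative definite. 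Thus
\[
\RR\setminus W(P(\lambda))=\{\lambda\in\RR:\lambda A-B>0\}\cup\{\lambda\in\RR:\lambda A-B<0\},
\]
and the whole theorem becomes a computation of this complement.

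Because $A,B$ are block diagonal, $\lambda A-B=(\lambda I_n-B_1)\oplus(-\lambda I_m-B_2)\oplus(-B_3)$, and a direct sum is definite iff each summand is. So $\lambda A-B>0$ iff $\lambda>\beta_1$, $\lambda<-\beta_2$ and $B_3<0$, while $\lambda A-B<0$ iff $\lambda<\alpha_1$, $\lambda>-\alpha_2$ and $B_3>0$ (dropping the vacuous conditions when a block is absent). This yields two candidate excised intervals, $P:=(\beta_1,-\beta_2)$ gated by $B_3<0$ (or $k=0$) and $M:=(-\alpha_2,\alpha_1)$ gated by $B_3>0$ (or $k=0$). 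A first use of the hypothesis: if $k>0$ and $B_3$ were not definite, any nonzero $w\in\ker B_3$ would give the common isotropic vector $(0,0,w)$; hence $B_3$ is definite whenever $k>0$, and its sign decides which gate is open.

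Cases (a)--(d) are then short. In (a), $A=I$ gives $x^*Ax=\|x\|^2>0$ and $W(P(\lambda))=W(B)=[\alpha_1,\beta_1]$, from which the singleton/positive/nonnegative refinements follow. In (b) only the first and third blocks survive and $B_3$ is definite, so the complement is $(-\infty,\alpha_1)$ if $B_3>0$ and $(\beta_1,\infty)$ if $B_3<0$, giving $[\alpha_1,\infty)$ or $(-\infty,\beta_1]$. In (c), $B>0$ forces $\alpha_1,\alpha_2>0$ and $B_3>0$, so only $M$ is open and contains $0$; I would then match $\alpha=-\alpha_2,\beta=\alpha_1$ with the spectral endpoints of $\tilde A:=B^{-1/2}AB^{-1/2}=B_1^{-1}\oplus(-B_2^{-1})\oplus 0_k$, whose spectrum runs from $-1/\alpha_2$ to $1/\alpha_1=[1/\alpha,1/\beta]$. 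In (d) I would exploit the reciprocal symmetry $\lambda\leftrightarrow\lambda^{-1}$: for $\lambda\neq 0$, $x^*(\lambda A-B)x=0$ iff $x^*(\lambda^{-1}B-A)x=0$, so $W(P(\lambda))\setminus\{0\}=\{\mu^{-1}:\mu\in W(B\lambda-A),\mu\neq0\}$, while $0\in W(P(\lambda))$ because $B$ is singular; since $B\geq0$, the swapped pencil $B\lambda-A$ (whose numerical range is invariant under the normalizing congruence) falls under case (b), and inverting its half-line answer produces the two disjoint unbounded intervals.

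The genuine work is case (e), which I expect to be the main obstacle. Here both $M$ and $P$ are a priori candidates, so I must show exactly one is excised. At most one is nonempty: if both were, then $\beta_1<-\beta_2$ and $-\alpha_2<\alpha_1$, which with $\alpha_1\leq\beta_1$, $\alpha_2\leq\beta_2$ (hence $-\beta_2\leq-\alpha_2$) force $\beta_1<-\beta_2\leq-\alpha_2<\alpha_1\leq\beta_1$, a contradiction. That at least one is nonempty is exactly where the hypothesis re-enters through Corollary \ref{CorForBebiano}: no common isotropic vector rules out $\RR\subseteq W(P(\lambda))$, so the complement cannot be empty. Hence precisely one interval is excised, and which one is determined by the open gate -- the sign of the definite block $B_3$ when $k>0$, and the sign of $\alpha_1+\alpha_2$ (equivalently, nonemptiness of $M$ versus $P$) when $k=0$ -- giving $(\alpha,\beta)=(-\alpha_2,\alpha_1)$ or $(\beta_1,-\beta_2)$. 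The delicate point demanding the most careful bookkeeping is precisely this selection: one must track how the definiteness of the zero-block datum $B_3$ interacts with the endpoints $\alpha_i,\beta_i$ so that the resulting $(\alpha,\beta)$ comes out nonempty and correctly oriented, since an interval gated by the wrong sign of $B_3$ would be empty.
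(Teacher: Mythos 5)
Your proposal is correct in substance, and it takes a genuinely different route from the paper's. The paper proves only case (e), citing \cite{Li-Rodman} for (a)--(d), and it works by hand: it writes $\lambda$ as the Rayleigh-type quotient coming from the quadratic form, excludes the gap interval via an explicit inequality, exhibits explicit vectors attaining every value outside the gap, and rules out the bad sign configurations by explicitly constructed isotropic vectors. You instead reduce the whole theorem to the single equivalence $\lambda\in W(P(\lambda))$ iff $\lambda A-B$ is not definite (valid because $W(P(\lambda))\subseteq\RR$ once common isotropic vectors are excluded, and because the numerical range of a Hermitian matrix is the interval between its extreme eigenvalues), and then read definiteness off blockwise. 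This uniformly recovers (a)--(d) as well, so your argument is self-contained where the paper defers to \cite{Li-Rodman}. Where the paper constructs isotropic vectors to force $a_1+b_1>0$ or $a_n+b_m<0$, you invoke Corollary \ref{CorForBebiano} to get nonemptiness of the excised interval; this is legitimate and not circular, since that corollary is proved in Section 2 via Theorem \ref{convIso} independently of Theorem \ref{improvedLiRodman}, though it does make your case (e) rest on the convexity machinery of Section 2 where the paper's construction is elementary.

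Two points deserve attention. First, a small gap: you justify ``$B_3$ is definite when $k>0$'' by taking $w\in\ker B_3$, but $B_3$ can be non-definite yet invertible (diagonal entries of both signs), in which case $\ker B_3=\{0\}$. The fix is immediate: if $B_3$ is not definite then $0\in W(B_3)$, so some unit vector $w$ satisfies $w^*B_3w=0$, and $(0,0,w)$ is a common isotropic vector. Second---and this is a point in your favor---your gate analysis yields a selection rule in case (e) that differs from the theorem's literal statement: you excise $(-\alpha_2,\alpha_1)$ when $B_3>0$ and $(\beta_1,-\beta_2)$ when $B_3<0$, whereas the statement asserts $(\alpha,\beta)=(-\alpha_2,\alpha_1)$ whenever $k>0$, regardless of the sign of $B_3$. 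Your version is the correct one: for $A=\mathrm{diag}(1,-1,0)$, $B=\mathrm{diag}(1,-2,-1)$ there is no common isotropic vector, and a direct check gives $W(P(\lambda))=\RR\setminus(1,2)=\RR\setminus(\beta_1,-\beta_2)$, while the stated rule would produce the empty ``interval'' $(2,1)$ and hence $W(P(\lambda))=\RR$, contradicting Corollary \ref{CorForBebiano}. The paper's proof loses this subcase at the step ``we may assume that $c_1>0$'': that normalization amounts to negating the pencil, which swaps the roles of the first two blocks and changes the endpoint formula, and the conclusion was never translated back. So your argument does not literally establish (e) as printed; it establishes the corrected version of (e), which is what the statement should say.
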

\begin{proof}
Proofs of (a)--(d) can be found in \cite{Li-Rodman}, so we will concentrate on case (e). 

\noindent Without loss of generality we may assume that $a_1\leq a_2\ldots\leq a_n$, $b_1\leq b_2\ldots\leq b_m$, $c_1\leq c_2\ldots\leq c_k$. Obviously, if for some $i,j$ equality $a_i+b_j=0$ holds, then thr vector $$(0,\ldots,0,1,0,\ldots,0,1,0,\ldots,0), \quad \text{units are on the $i$-th and $n+j$-th places}$$ would be an isotropic vector for $A$ and $B$, thus such equalities cannot hold. 

\noindent \textbf{Case 1:} $k\neq0$. 

\noindent We may assume that $c_1>0$ as all $c_i$ must have the same sign (otherwise there will be an isotropic vector). Then equality $x^*(\lambda A-B)x=0$ becomes 
\begin{equation}\label{lambdaexpr1}
\lambda = \dfrac{\sum_{i=1}^n{|x_i|^2a_i}+\sum_{j=1}^m{|y_j|^2b_j}+\sum_{l=1}^k{|z_l|^2c_l}}{\sum_{i=1}^n|x_i|^2-\sum_{j=1}^m|y_j|^2},
\end{equation}
\noindent where $x=(x_1,\ldots,x_n,y_1,\ldots,y_m,z_1\ldots,z_k)^T$. 
\noindent It is easy to see that $a_1+b_1>0$ as otherwise vector $(1,0,\ldots,0,1,0,\ldots,0,\sqrt{\frac{-a_1-b_1}{c_1}},0,\ldots,0)$ would be an isotropic one. Then we can rewrite \eqref{lambdaexpr1} as 
\begin{equation*}
\lambda = a_1-\dfrac{\sum_{i=1}^n{|x_i|^2(a_i-a_1)}+\sum_{j=1}^m{|y_j|^2(b_j+a_1)}+\sum_{l=1}^k{|z_l|^2c_l}}{\sum_{j=1}^m|y_j|^2-\sum_{i=1}^n|x_i|^2}.
\end{equation*}
However, that means that $\lambda\not\in (-b_1,a_1)$ as $$\sum_{i=1}^n{|x_i|^2(a_i-a_1)}+\sum_{j=1}^m{|y_j|^2(b_j+a_1)}+\sum_{l=1}^k{|z_l|^2c_l}\geq(a_1+b_1)(\sum_{j=1}^m|y_j|^2-\sum_{i=1}^n|x_i|^2).$$
Furthermore, $\lambda$ can achieve any value from $\RR\setminus (-b_1,a_1)$ as $a_1+t$, for $t>0$, can be achieved by the vector $$\Big(1,0,\ldots,0,\dfrac{t}{a_1+b_1},0,\ldots,0\Big), \quad \text{where $\dfrac{t}{a_1+b_1}$ is on $(n+1)^{\text{st}}$ position}  $$
and $-b_1-t$, for $t>0$, can be achieved by the vector $$\Big(\dfrac{t}{t+a_1+b_1},0,\ldots,0,1,0,\ldots,0\Big), \quad \text{where 1 is on $(n+1)^{\text{st}}$ position}  $$

\noindent \textbf{Case 2:} $k=0$. 

\noindent First of all, we note that \eqref{lambdaexpr1} becomes 
\begin{equation} \label{lambdaexpr2}
\lambda = \dfrac{\sum_{i=1}^n{|x_i|^2a_i}+\sum_{j=1}^m{|y_j|^2b_j}}{\sum_{i=1}^n|x_i|^2-\sum^m_{j=1}|y_j|^2},
\end{equation} in this case. Furthermore, we claim that if $A$ and $B$ have no common isotropic vector, then either $a_1+b_1>0$ or $a_n+b_m<0$. Note that if $n=m=1$, then the claim trivially holds, so we proceed with the assumption $n\geq 2$, $m\geq 1$. If $a_1+b_1<0$ and $a_n+b_m>0$, then $$x=\bigg(\sqrt{\frac{-a_n-b_m}{a_1+b_m}},0,\ldots,0,1,0,\ldots,0,\sqrt{\frac{a_1-a_n}{a_1+b_m}}\bigg), \quad \text{$1$ is on the $n^{\text{th}}$ position}$$ would be an isotropic vector if $a_1+b_m<0$ or $$x=\bigg(\sqrt{1-\frac{a_1+b_1}{a_1+b_m}},0,\ldots,0,1,0,\ldots,0,\sqrt{-\frac{a_1+b_1}{a_1+b_m}}\bigg), \quad \text{$1$ is on the $(n+1)^{\text{st}}$ position}$$ 
would be an isotropic vector if $a_1+b_m>0$. Note that in this case $m\geq 2$, as $a_1+b_1$ is already assumed to be negative.

\noindent If $a_1+b_1>0$, then the same argument as in the first case shows that $W(P(\lambda))=\RR\setminus (-b_1,a_1)$. 

\noindent If $a_n+b_m<0$, rewrite \eqref{lambdaexpr2} as 
\begin{equation}
\lambda = a_n+\dfrac{\sum^n_{i=1}{|x_i|^2(a_n-a_i)}+\sum^m_{j=1}{|y_j|^2(-a_n-b_j)}}{\sum^m_{j=1}|y_j|^2-\sum_{i=1}^n|x_i|^2},
\end{equation}
and note that $$\sum_{i=1}^n{|x_i|^2(a_n-a_i)}+\sum_{j=1}^m{|y_j|^2(-a_n-b_j)}\geq (-a_n-b_m)(\sum^m_{j=1}|y_j|^2-\sum_{i=1}^n|x_i|^2),$$ so $\lambda\not\in (a_n,-b_m)$. All other values can be reached as in the first case and that ends the proof.
\end{proof}

As we mentioned at the beginning of this section, all cases considered in the above theorem characterize the numerical range of any linear pencil $A\lambda-B$, where $A,B$ are Hermitian and do not have a common isotropic vector. Since in all cases $\mathbb{R}\not\subset W(A\lambda-B)$, one can conclude Corollary \ref{CorForBebiano} from Theorem \ref{improvedLiRodman}

\section{Matrix polynomials with Hermitian coefficients}\label{last}

The following example explains that for matrix polynomials with degree larger than $1$, the property $W(P(\lambda))=\comp$ do not implies existence of common isotropic vector even if the matrices are Hermitian and diagonal.

\begin{example}\label{VadymEx}
The matrix polynomial $$P(\lambda)=\lambda^2  \left( \begin{array}{ccc}
1 & 0 & 0 \\
0 & -1 & 0 \\
0 & 0 & 0 \end{array} \right) +
\lambda  \left( \begin{array}{ccc}
4 & 0 & 0 \\
0 & -3 & 0 \\
0 & 0 & -4 \end{array} \right) +
 \left( \begin{array}{ccc}
-1 & 0 & 0 \\
0 & 2 & 0 \\
0 & 0 & -9 \end{array} \right)$$
\noindent satisfies $W(P(\lambda))=\CC$ and its coefficients do not have a common isotropic vector. 
\end{example}
\begin{proof}
\noindent In order to show the first part of the claim, it would be sufficient to show that for any $a,b\in\RR$ there exists a non-zero vector $v\in\CC^3$ such that $\lambda=a+bi$ is a solution to the equation $v^*P(\lambda)v=0$. Let $v=(\sqrt{x},\sqrt{y},\sqrt{z})$, where $x,y,z\in\RR_{\geq 0}$. Then the equation becomes $$\lambda^2(x-y)+\lambda (4x-3y-4z)+(-x+2y-9z)=0.$$ 
\noindent The quadratic formula gives us solutions $$\lambda_{1,2}=\dfrac{-4x+3y+4z\pm \sqrt{(4x-3y-4z)^2-4(x-y)(-x+2y-9z)}}{2(x-y)},$$
\noindent so it will be enough to find $x,y,z$ such that $$a=\dfrac{-4x+3y+4z}{2(x-y)}, \quad -b^2=\dfrac{(4x-3y-4z)^2-4(x-y)(-x+2y-9z)}{4(x-y)^2},$$
\noindent which is equivalent to
\begin{align*}
a=\dfrac{-4x+3y+4z}{2(x-y)}&, \quad -b^2=a^2-\dfrac{(-x+2y-9z)}{(x-y)} \\
a+2-\dfrac{2z}{x-y}=-\dfrac{y}{x-y}&, \quad -b^2-a^2-\dfrac{9z}{x-y}-1=-\dfrac{y}{(x-y)} \\
\dfrac{z}{x-y}=\dfrac{-b^2-a^2-a-3}{7}&, \quad \dfrac{y}{x-y}=\dfrac{-2b^2-2a^2-9a-20}{7}.
\end{align*}
It is easy to see that $\dfrac{-b^2-a^2-a-3}{7}<0$ and $\dfrac{-2b^2-2a^2-9a-20}{7}<-1$, so for every $y>0$ one can find $x>0$, which satisfies the second equation. However, for every such pair $x-y<0$, thus there exists $z>0$ which satisfies the first equation, showing that $W(P(\lambda))=\CC$. 

In order to see that the coefficients do not have a common isotropic vector, suppose that $v=(z_1,z_2,z_3)$ would be a such one. Then we get $$|z_1|^2-|z_2|^2=0, \quad 4|z_1|^2-3|z_2|^2-4|z_3|^2=0, \quad -|z_1|^2+2|z_2|^2-9|z_3|^2=0,$$
\noindent but this system's only solution is $z_1=z_2=z_3=0$, thus proving the rest of the claim.
\end{proof}

If we assume that matrices $A$, $B$, $C$ are semidefinite, then no such example exist. 

\begin{proposition}
If $A,B,C$ are semidefinite matrices such that $W(P(\lambda))=\comp$, where $P(\lambda)=A\lambda^2+B\lambda+C$, then $A,B,C$ have a common isotropic vector.
\end{proposition}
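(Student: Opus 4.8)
The plan is to prove the contrapositive: assuming $A,B,C$ are semidefinite and have no common isotropic vector, I will exhibit a $\lambda\in\comp$ with $\lambda\notin W(P(\lambda))$. The whole argument rests on the rigidity coming from semidefiniteness: for each Hermitian semidefinite matrix $M$ the scalar $x^*Mx$ is real and of one fixed sign. Accordingly I fix signs $\varepsilon_A,\varepsilon_B,\varepsilon_C\in\{+1,-1\}$ with $\varepsilon_A A,\varepsilon_B B,\varepsilon_C C\geq 0$, and for a nonzero $x$ I write $a=x^*Ax$, $b=x^*Bx$, $c=x^*Cx$, so that $\varepsilon_A a\geq 0$, $\varepsilon_B b\geq 0$, $\varepsilon_C c\geq 0$. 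Observe that $\lambda\in W(P(\lambda))$ exactly when some such triple satisfies $a\lambda^2+b\lambda+c=0$.

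Next I would analyse the \emph{non-real} points of $W(P(\lambda))$, since that is where the obstruction lives. Writing $\lambda=u+iv$ with $v\neq 0$ and separating the real and imaginary parts of $a\lambda^2+b\lambda+c=0$ gives $v(2au+b)=0$ and $a(u^2-v^2)+bu+c=0$; since $v\neq0$ the first forces $b=-2au$, and substituting into the second yields
\begin{equation*}
b=-2a\,\mathrm{Re}\,\lambda,\qquad c=a\,|\lambda|^2 .
\end{equation*}
If $a=0$ these identities give $b=c=0$, i.e. $x$ would be a common isotropic vector, which is excluded; hence $a\neq 0$ for every non-real $\lambda\in W(P(\lambda))$. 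Consequently $c=a|\lambda|^2$ shows that $a$ and $c$ must share the same (nonzero) sign, and $\mathrm{Re}\,\lambda=-b/(2a)$.

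I would then close the argument by sign bookkeeping. If $\varepsilon_A\neq\varepsilon_C$, the equality $c=a|\lambda|^2$ is impossible (the two sides have opposite signs), so no non-real number is attained and, for instance, $i\notin W(P(\lambda))$. If $\varepsilon_A=\varepsilon_C$, then $\mathrm{Re}\,\lambda=-b/(2a)$ has the fixed sign $-\varepsilon_A\varepsilon_B$ (or is $0$), so every attainable non-real $\lambda$ lies in one closed half-plane; the nonempty open quarter-plane $\{\,\mathrm{Re}\,\lambda\cdot\varepsilon_A\varepsilon_B>0,\ \mathrm{Im}\,\lambda\neq0\,\}$ is therefore disjoint from $W(P(\lambda))$. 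In either case $W(P(\lambda))\neq\comp$, contradicting the hypothesis, so a common isotropic vector must exist.

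The computation is short, so the genuine content is not a hard estimate but the two structural observations: that non-real roots of $a\lambda^2+b\lambda+c$ force the coordinate relations $c=a|\lambda|^2$ and $b=-2a\,\mathrm{Re}\,\lambda$, and that semidefiniteness pins down the signs of $a,b,c$. The only place demanding care is the degenerate case $a=0$, where one must notice that it already produces a common isotropic vector rather than a genuine non-real point; handling this correctly is what makes the short sign discussion airtight.
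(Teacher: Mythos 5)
Your proof is correct, and it takes a genuinely different route from the paper's. The paper reduces (via $W(P(\lambda))=W(-P(\lambda))$) to four sign patterns with $A\geq 0$ and then uses two distinct mechanisms: when $A$ and $C$ are both positive semidefinite it excludes a real half-line (for $\lambda>0$ or $\lambda<0$, depending on the sign of $B$, the identity $\lambda^2x^*Ax+x^*Cx=-\lambda x^*Bx$ forces both sides to vanish, producing a common isotropic vector), and when $A\geq 0$, $C\leq 0$ it notes that $(x^*Bx)^2\geq 0\geq x^*Ax\cdot x^*Cx$ makes every root real, so $W(P(\lambda))\subset\RR$. You instead treat all sign patterns uniformly by parametrizing the non-real points of the numerical range: the relations $b=-2a\,\mathrm{Re}\,\lambda$ and $c=a|\lambda|^2$, together with the observation that $a=0$ would force a common isotropic vector, recover the paper's cases (3)--(4) conclusion when $\varepsilon_A\neq\varepsilon_C$ (no non-real points at all), and when $\varepsilon_A=\varepsilon_C$ they confine all non-real points to a closed half-plane, so an open set of non-real points is missed --- where the paper instead excludes a real half-line. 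Your approach buys a single computation in place of the case split, plus finer information about where non-real points can lie; the paper's split is arguably more transparent and also says something about which \emph{real} points are excluded. One cosmetic remark: your excluded set $\{\mathrm{Re}\,\lambda\cdot\varepsilon_A\varepsilon_B>0,\ \mathrm{Im}\,\lambda\neq0\}$ is an open half-plane minus the real axis rather than a quarter-plane, but this does not affect the argument --- it is nonempty and disjoint from $W(P(\lambda))$, which is all that is needed.
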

\begin{proof}
Let us assume that $A,B,C$ do not have a common isotropic vector.

Since $W(P(\lambda))=W(-P(\lambda))$, it is enough to consider four cases:\\
(1) $A\geq 0, B\geq 0, C\geq0$\\
Then $\mathbb{R}_+\not\subset W(P(\lambda))$, because for $\lambda>0$ we have $$\lambda^2x^*Ax+x^*Cx \geq 0 \geq -\lambda x^*Bx,$$
and since $A,B,C$ has no common isotropic vector at least one inequality is strict.\\
(2) $A\geq 0, B\leq 0, C\geq0$

Then $\mathbb{R}_-\not\subset W(P(\lambda))$, because for $\lambda<0$ we have $$\lambda^2x^*Ax+x^*Cx \geq 0 \geq -\lambda x^*Bx,$$
and since $A,B,C$ has no common isotropic vector at least one inequality is strict.\\
(3) $A\geq 0, B\geq 0, C\leq0$ and (4) $A\geq 0, B\leq 0, C\leq0$

Since $$(x^*Bx)^2\geq 0 \geq x^*Ax\cdot x^*Cx,$$
all solutions of the equation $$\lambda^2x^*Ax+ \lambda x^*Bx+x^*Cx=0,$$ are real. Thus $W(P(\lambda))\subset \mathbb{R}$.
\end{proof}

\section*{Acknowledgment}
The authors are sincerely grateful to Michał Wojtylak for bringing their attention to this subject and for his all inspiring comments.

Second author acknowledges the financial support by the JU (POB DigiWorld) grant, decision No. PSP: U1U/P06/NO/02.12

\bibliographystyle{alpha}
\bibliography{sample}

\address{Faculty of Mathematics and Computer Science\\
   Jagiellonian University\\
   \L ojasiewicza 6\\
   30-348 Krak\'ow\\
   Poland}
\newline
\email{vadym.koval@student.uj.edu.pl,\\patryk.pagacz@uj.edu.pl }

\end{document}